\newtheorem{thm}{Theorem}[section]
\newtheorem{cor}[thm]{Corollary}
\newtheorem{lem}[thm]{Lemma}
\newtheorem{prop}[thm]{Proposition}
\newcommand{\g}{\mathfrak{g}}
\newcommand{\m}{\mathfrak{m}}
\newcommand{\CC}{\mathbb{C}}
\newcommand{\ZZ}{\mathbb{Z}}
\newcommand{\pp}{\mathfrak{p}}
\begin{document}

\title{Injective hulls of simple modules over differential operator rings}

\author{Paula A.A.B. Carvalho, Can Hatipo\u{g}lu, Christian Lomp}

\address{Department of Mathematics, Faculty of Science, University of Porto, Rua Campo Alegre 687, 4169-007 Porto, Portugal}

\begin{abstract}
We study injective hulls of simple modules over differential operator rings $R[\theta; d]$, providing necessary conditions under which these modules are locally Artinian. As a consequence we characterize Ore extensions of 
$S=K[x][\theta;\sigma, d]$ for $d$ a $K$-linear derivation and $\sigma$ a $K$-linear automorphism of $K[x]$ such that 
injective hulls of simple $S$-modules are locally Artinian. 
\end{abstract}

\maketitle

\section{Introduction}
In 1960, E.Matlis showed that any injective hull of a simple module over a commutative Noetherian ring is Artinian (see 
\cite{Matlis} and \cite[Proposition 3]{Matlis_DCC}).
In connection with the Jacobson Conjecture for Noetherian rings, \textit{i.e.} whether $\bigcap_{n=1}^\infty J^n = 0$ for the 
Jacobson radical $J$ of a Noetherian ring $R$,
Jategaonkar showed in \cite{Jategaonkar_conj} (see also \cite{Cauchon}, \cite{Schelter}) that the injective hulls of 
simple modules are locally Artinian provided the ring $R$ is fully bounded Noetherian (FBN). This allowed him to answer 
the Jacobson Conjecture in the affirmative for FBN rings. Recall that a module is called {\it locally Artinian} if every 
finitely generated submodule of it is Artinian.
After Jategaonkar's result the question arose whether for a given Noetherian ring $A$ the condition
\begin{center}$(\diamond)\:\:$ Injective hulls of simple left $A$-modules are locally Artinian\end{center}
was necessary to prove Jacobson Conjecture which quickly turned out not to be the case. 
Property $(\diamond)$ says that all finitely generated essential extensions of simple left $A$-modules are Artinian and 
in case 
$A$ is left Noetherian the property above is equivalent to the condition that the class of semi-Artinian left 
$A$-modules, \textit{i.e.} modules $M$ that  are the union of their socle series, is closed under essential extensions (see 
\cite{Can}). However property $(\diamond)$ remained a subtle condition for Noetherian rings whose meaning is not yet 
fully understood. 

For algebras related to $U(\mathfrak{sl}_2)$ the condition has been examined 
in \cite{Dahlberg},\cite{injective_modules_over_down-up_algebras}, \cite{PaulaIan}, \cite{musson_classification}. 
One of the first examples of a Noetherian domain that does not satisfy $(\diamond)$ has been found by Ian Musson 
in \cite{some-examples-of-modules-over-noetherian-rings-musson} concluding that whenever  $\g$ is a finite dimensional 
solvable non-nilpotent Lie algebra, 
then $U(\g)$ does not satisfy property $(\diamond)$. The class of finite dimensional nilpotent Lie algebras $\g$ for 
which $U(\g)$ satisfies our desired property has 
been recently determined in \cite{CanChristian}. The main result of this paper is a complete and explicit answer to the 
question when $S=K[x][\theta;d]$
satisfies property $(\diamond)$, for $K$ a field of characteristic zero and $d$ a $K$-linear derivation of $K[x]$. We show that there always exists a non-Artinian 
finitely generated essential 
extension of a simple $S$-module if and only if  $d$  is not locally nilpotent or equivalently if and only if 
$S$ is neither isomorphic to the polynomial ring $K[x,y]$ nor to the first Weyl algebra $A_1(K)$. This extends a recent 
result by I.Musson for the derivations 
$d(x)=x^r$ (see \cite{musson_classification}).  Applying a result of P.Carvalho and I.Musson \cite{PaulaIan} and results 
of Alev \textit{et al.} \cite{AlevDumas} and Awami \textit{et al.} \cite{AwamiVandenBerghVanOystaeyen}, we can also characterise those Ore extensions of $K[x]$ which satisfy  $(\diamond)$.

Let $R$ be a ring, $d$ a derivation of $R$, \textit{i.e.} an additive map $d:R\rightarrow R$ such that $d(ab)=d(a)b+ad(b)$ for all $a,b\in R$. 
 A subset $I$ of $R$ is called $d$-stable, if $d(I)\subseteq I$. An ideal of 
$R$ that is $d$-stable is called a $d$-ideal. The set $R^d=\{a\in R \mid d(a)=0\}$ is a subring of $R$  called the 
\emph{subring of constants of $d$}. 

The \emph{differential operator ring} of $R$ with respect to $d$ is 
the ring $S=R[\theta;d]$, \textit{i.e.} $S$ is an overring of $R$, free as left $R$-module with basis $\{\theta^n \mid n\geq 0\}$ 
subject to the relation
$ \theta a = a\theta + d(a)$, for all $a\in R$.
Moreover  $S$ is also free as right $R$-module with basis $\{\theta^n\mid n\geq 0\}$ and the following identities hold 
(see \cite{GoodearlWarfield}):
$$\displaystyle \theta^n a =\sum_{i=0}^n {n \choose i} d^{n-i}(a) \theta^i \qquad \mbox{ and } \qquad 
\displaystyle a\theta^n =\sum_{i=0}^n {n \choose i}(-1)^i \theta^{n-i}d^{i}(a)  \qquad \forall a\in R, n\geq 0.$$

The ring $R$ becomes a left $S$-module by letting  $\theta$ act as $d$ on $R$, \textit{i.e.} 
$a\theta^n \cdot x = ad^n(x)$ for all $a,x\in R$ and $n\geq 0$. 
The left $S$-submodules of $R$ are precisely the left ideals of $R$ that are $d$-stable and it is not difficult to prove 
that $R^d$ is isomorphic to the ring of endomorphisms of $R$ as left $S$-module.  The map $\alpha: S\longrightarrow R$ with  
$\alpha\left(\sum_{i=0}^n a_i\theta^i\right) = \sum_{i=0}^n a_i\theta^i\cdot 1 = a_0,$ for all 
$\sum_{i=0}^n a_i\theta^i  \in S$ is an epimorphism of left $S$-modules and splits as left $R$-modules, by the inclusion map $R\subseteq S$. Since 
the kernel of $\alpha$ is $S\theta$, we have $S/S\theta \simeq R$ as left $S$-module. Hence in particular the lattice 
$\mathcal{L}({_S{S/S\theta}})$ of left $S$-submodules of $S/S\theta$ is isomorphic to the lattice $\mathcal{L}({_SR})$ 
of $d$-stable left ideals of $R$.


In this paper we discuss necessary and sufficient conditions for $S=R[\theta; d]$ to satisfy property $(\diamond)$.  
Given a commutative Noetherian domain $R$ which is a finitely generated $K$-algebra over an algebraically closed  field $K$ of  characteristic $0$, and a $K$-linear locally nilpotent derivation $d$, we show in the second section, that $S=R[\theta; d]$ satisfies property $(\diamond)$  (see Proposition \ref{prop_positive}). However if $R$ is not commutative, we show that $S$ might not satisfy property $(\diamond)$  (see Theorem \ref{lnd_artinian}).

In the last section we show that if $R$ is a commutative Noetherian domain such that  $S$ satisfies  $(\diamond)$, $R$ being $d$-primitive implies $R$ to be $d$-simple.  I.Musson's first example of a Noetherian domain not satisfying $(\diamond)$ was the ring $K[x][\theta; x\frac{\partial}{\partial x}]$ where $K$ is a field (see \cite{some-examples-of-modules-over-noetherian-rings-musson}). In \cite{musson_classification} he extended his construction of a cyclic non-Artinian essential extension of a simple  module to rings of the form $K[x][\theta; x^r\frac{\partial}{\partial x}]$ with $r\geq 1$.
In this paper, extending I.Musson's results,  we classify completely the differential operator rings  $S=K[x][\theta; d]$ satisfying $(\diamond)$, with $d$ a $K$-linear derivation,  as those with $d$ being locally nilpotent, \textit{i.e.} $S$ is either commutative or isomorphic to the first Weyl algebra. As a Corollary a complete characterisation of Ore extensions  $K[x][\theta;\alpha,d]$  with property $(\diamond)$  is obtained in Theorem \ref{OreExtension}.


\section{Locally Nilpotent Derivations}
In this section we discuss locally nilpotent derivations $d$ on a ring $R$.  Recall that a derivation $d$ of $R$ is called \emph{locally nilpotent} if for every $a\in R$ there exists $n>0$ such that  $d^n(a)=0$. Note that if $d$ is locally nilpotent, then every non-zero $d$-stable left ideal intersects non-trivially the subring of constants $R^d$. The first positive result shows that over a commutative finitely generated $K$-algebra $R$ the differential 
operator ring $S=R[\theta;d]$ satisfies $(\diamond)$ provided $d$ is locally nilpotent.

\begin{prop}\label{prop_positive} Let $K$ be an algebraically closed field of characteristic zero and let $R$ be a 
commutative finitely generated $K$-algebra with locally nilpotent derivation $d$. Then $R[\theta; d]$ satisfies property  $(\diamond)$.
\end{prop}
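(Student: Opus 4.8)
The plan is to verify property $(\diamond)$ in its finitistic form: since $R$ is commutative Noetherian, $S=R[\theta;d]$ is left Noetherian, so it suffices to show that every finitely generated (equivalently, by the usual reduction, every cyclic) essential extension $E$ of a simple left $S$-module $M$ is Artinian. As $E$ is Noetherian, I would first record the observation that it is enough to prove $E$ is \emph{semi-Artinian}: a Noetherian semi-Artinian module has its socle series terminate after finitely many steps with finitely generated semisimple, hence finite-length, layers, and is therefore of finite length. Note also that $M=\mathrm{soc}(E)$ is the unique simple submodule, so the whole difficulty is to show that the socle series of $E$ climbs all the way up to $E$.

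Next I would exploit commutativity of $R$ to locate $M$. Put $P=\Ann{R}{M}$; a short computation with the relation $\theta a=a\theta+d(a)$ shows $P$ is a $d$-prime ideal and that $SP=PS$, so $S/PS\cong (R/P)[\theta;\bar d]$ with $\bar d$ again locally nilpotent on the affine domain $R/P$. Replacing $R$ by $R/P$, I may assume $M$ is $R$-faithful over a domain. Here the hypotheses enter decisively: local nilpotence forces every nonzero $\bar d$-stable ideal to meet the constants $R^{d}$, and the Nullstellensatz (for $K$ algebraically closed of characteristic zero) lets me read off the simple $S$-modules and the $R$-injective hulls of simple $R$-modules, which are Artinian by Matlis. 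The aim of this step is to tie every relevant $d$-stable submodule of $E$ back to the commutative, Matlis-good ring $R^{d}$.

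The structural heart is a \textbf{local slice}. When $d\neq0$ I would choose, using local nilpotence over a characteristic-zero domain, an element $a\in R^{d}\setminus\{0\}$ and a slice $s$ with $d(s)=a$; since $a$ is central in $S$, localizing at $a$ gives $S_a\cong (R^{d})_a[s][\phi;\partial_s]$, i.e. the first Weyl algebra over the commutative base $(R^{d})_a$, with $\phi=a^{-1}\theta$. On this localization the $\theta$-direction becomes the standard $\partial_s$, whose local nilpotence keeps $d$-stable submodules anchored to the constant base, and the transverse directions are governed by Matlis over $(R^{d})_a$. I would then run a Noetherian induction on $E$ using the central element $a$: the generic part (where $a$ is invertible) is controlled by the Weyl-algebra-over-a-commutative-base picture just described, while the part supported on $V(a)$ is a module over a ring with a strictly smaller $d$-stable structure and is handled by the inductive hypothesis together with the commutative case $d=0$, where $S$ is commutative and $(\diamond)$ is exactly Matlis' theorem.

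The step I expect to be the main obstacle is the descent from the central localization back to the global module $E$: one must show that essentiality of $M\subseteq E$ and the finiteness obtained generically survive the patching with the special fibre $V(a)$, so that only finitely many simple factors occur with bounded multiplicity and the socle series of $E$ genuinely reaches $E$. It is precisely local nilpotence that makes this gluing possible, since it guarantees the slice exists and keeps every $d$-stable layer pinned to the Matlis-good commutative ring $R^{d}$; the algebraically-closed and characteristic-zero hypotheses are used through the slice theorem and the Nullstellensatz and should not be expected to be removable by this argument.
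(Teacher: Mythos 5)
Your proposal is a programme rather than a proof, and the step you yourself flag as ``the main obstacle'' --- descending from the central localization $S_a$ back to the global essential extension $E$ and gluing the generic part with the fibre over $V(a)$ --- is exactly where all the difficulty of the statement lives; it is not carried out, and nothing in the sketch indicates how it would be. Concretely: property $(\diamond)$ is not known to be detectable after inverting a central element (an essential extension of a simple $S$-module need not split into an $a$-torsion part and an $S_a$-module in a way that lets you test Artinianness on each piece separately --- if $aM=0$ the quotient $E/t_a(E)$ is an $S_a$-module with \emph{zero} socle, so the inductive hypothesis says nothing about it), and the generic input you invoke, namely that the first Weyl algebra over an arbitrary commutative affine base $(R^d)_a$ satisfies $(\diamond)$, is itself a nontrivial theorem of essentially the same depth as the proposition, not something you may assume. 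Several smaller steps also need justification (that $\Ann{R}{M}$ is prime and not merely $d$-prime uses characteristic zero; ``the Nullstellensatz lets me read off the simple $S$-modules'' is not meaningful for $S=R[\theta;d]$). As it stands the argument does not close.

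For comparison, the paper's proof is a short reduction of an entirely different kind: since $d$ is locally nilpotent, the finite set $V=\{d^i(x_j)\}$ built from the algebra generators $x_1,\dots,x_n$ spans a finite-dimensional abelian subspace $\h\subseteq R$ with $[\theta,\h]\subseteq\h$, so $\g=\h\oplus K\theta$ is a finite-dimensional \emph{nilpotent} Lie subalgebra of $(S,[\,,\,])$ whose enveloping algebra surjects onto $S$. One then quotes \cite[Theorem 1.1]{CanChristian}, which says $U(\g)$ satisfies $(\diamond)$ for nilpotent $\g$, and uses the elementary fact that $(\diamond)$ passes to factor rings. The idea your approach misses is precisely this: local nilpotence is used not to produce a slice but to make $\g$ finite-dimensional and nilpotent, after which the whole problem is outsourced to the enveloping-algebra result. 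If you want a self-contained argument along your localization lines, you would first have to prove a descent lemma for $(\diamond)$ along central localizations, which is not available.
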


\begin{proof}
Let $x_1, \ldots, x_n$ be the algebra generators of $R$ and consider the set $$V=\left\{ d^i(x_j) \mid 1\leq j \leq n, 
i\geq 0\right\}.$$ Since $d$ is locally nilpotent, $V$ is a finite set containing all generators $x_1, \ldots, x_n$. Let 
$\mathfrak{h}=\mathrm{span}(V)$ be the (finite dimensional) subspace of $R$ generated by $V$. Consider 
$\mathfrak{g}=\mathfrak{h}\oplus K\theta$, which is a  subspace of $S=R[\theta; d]$. Since $[\theta, d^i(x_j)]=d^{i+1}(x_j)\in 
\mathfrak{h}$, the space $\mathfrak{g}$ is closed under the commutator bracket $[,]$ in $S$ and hence is a Lie 
subalgebra of $(S,[,])$. Since $d$ is locally nilpotent, $\mathfrak{g}$ is a (finite dimensional) nilpotent Lie algebra 
over $K$ with the Abelian ideal $\mathfrak{h}$ of codimension $1$. By \cite[Theorem 1.1]{CanChristian} $U(\mathfrak{g})$ 
satisfies property $(\diamond)$. The Lie algebra inclusion $\mathfrak{g} \rightarrow R[\theta;d]$ induces an algebra 
map 
$U(\mathfrak{g}) \rightarrow R[\theta; d]$ which is surjective, since $\mathfrak{g}$ contains $\theta$ and all algebra 
generators of $R$. Thus $R[\theta; d]$ also satisfies $(\diamond)$.
\end{proof}

 Proposition \ref{prop_positive} applies in particular to $R=K[x_1,\ldots, x_n]$ and shows that $R[\theta; d]$ satisfies $(\diamond)$ provided $d$ is a locally nilpotent derivation of $K[x_1, \ldots, x_n]$ with $K$ being algebraically closed of characteristic $0$. Examples of locally nilpotent derivations of $R$ 
 are derivations $d$ that satisfy $d(x_1)\in K$ and $d(x_i)\in K[x_1,\ldots, x_{i-1}]$ for all $i>1$.
 
 For an arbitrary non-commutative finitely generated Noetherian domain $R$ however Proposition \ref{prop_positive} might not hold as the following result shows:

\begin{thm}\label{lnd_artinian}  Let $R$ be a Noetherian domain over a field $K$ of characteristic $0$ with locally 
nilpotent derivation $d$ and an element $x\in R$ with $d(x)=1$. Suppose that $S=R[\theta;d]$ satisfies property $(\diamond)$. 
If there exists $a\in R\setminus R^d$ such that $S/S(\theta+a)$ is Artinian as left $S$-module, then $R^d$ is a division ring.
\end{thm}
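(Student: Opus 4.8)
The plan is to first make the module $M=S/S(\theta+a)$ completely explicit. Since $S$ is free as a left $R$-module on $\{\theta^n\}$ and every nonzero element of $S(\theta+a)$ has strictly positive $\theta$-degree, we get $R\cap S(\theta+a)=0$, so $r\mapsto\overline r$ identifies $M$ with $R$ as left $R$-modules; under this identification $\theta$ acts as the additive operator $\delta(r)=d(r)-ra$, and the $S$-submodules of $M$ are exactly the left ideals $I$ of $R$ with $\delta(I)\subseteq I$. As $S=R[\theta;d]$ is Noetherian and $M$ is cyclic, $M$ is Noetherian; combined with the Artinian hypothesis, $M$ has finite length. Since $R$ is a Noetherian domain it is left Ore, so ${}_RR$ is uniform; hence $M$ is a uniform $S$-module and its socle $N=\mathrm{soc}(M)$ is simple and essential in $M$.

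Next I would reduce the conclusion to a statement about constants. Because $d$ is locally nilpotent, every nonzero $d$-stable left ideal meets $R^d$ nontrivially, and for any $c\in R^d$ the left ideal $Rc$ is $d$-stable (as $d(rc)=d(r)c\in Rc$). Hence $R^d$ is a division ring if and only if $R$ has no proper nonzero $d$-stable left ideal, equivalently $S/S\theta$ is a simple $S$-module: granting that, every nonzero $c\in R^d$ satisfies $Rc=R$, so has a left inverse $u$, and applying $d$ to $uc=1$ forces $u\in R^d$, after which the standard argument upgrades one-sided inverses in $R^d$ to two-sided ones. So it suffices to show every nonzero $c\in R^d$ is a unit. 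If such a $c$ is \emph{central} and not a unit, then each $Rc^{k}$ is $\delta$-stable, since $\delta(rc^{k})=(d(r)-ra)c^{k}\in Rc^{k}$ using centrality and $d(c)=0$, and $Rc\supsetneq Rc^{2}\supsetneq\cdots$ is strictly descending because $R$ is a domain; this contradicts the finite length of $M$. Thus no central non-unit constant can exist.

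The decisive case is a non-central non-unit constant $c\in R^d$, where $Rc$ need not be $\delta$-stable, so the descending chain lives only in $S/S\theta$ and is not visibly a chain of submodules of the Artinian module $M$. This is exactly where I expect property $(\diamond)$ to be indispensable. The structural engine is the slice: since $d$ is locally nilpotent with $d(x)=1$, one shows $R=R^d[x;\partial]$ is free as a left $R^d$-module on $\{x^{n}\}$ with $\partial=\mathrm{ad}_x|_{R^d}$ a derivation of $R^d$ and $d=\partial/\partial x$, so that inside $S$ one has $[\theta,x]=1$ and $\theta$ commuting with $R^d$. Using these relations together with the non-unit $c$, the plan is to manufacture a finitely generated (cyclic) essential extension of the simple module $N$ having infinite length, for instance by transporting the chain $Rc\supsetneq Rc^{2}\supsetneq\cdots$ through the $\theta$-action into a suitable cyclic submodule of $E(N)$; since $(\diamond)$ forces every finitely generated essential extension of $N$ to be Artinian, hence of finite length, this yields the contradiction that rules out non-unit constants. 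I expect this step — faithfully converting a non-unit constant, which a priori only obstructs simplicity of $S/S\theta$, into a non-Artinian finitely generated essential extension of the socle of $M$ — to be the main obstacle, precisely because it is the point where the $\delta$-stable lattice (controlled by the Artinian $M$) and the $d$-stable lattice (controlling $R^d$) must be reconciled.
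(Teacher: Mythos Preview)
Your proposal has a genuine gap: the non-central case is not proved. You correctly identify that for a non-central non-unit $c\in R^d$ the chain $Rc\supsetneq Rc^2\supsetneq\cdots$ is only $d$-stable, not $\delta$-stable, so it lives in $S/S\theta$ rather than in $M$; but then you only state a \emph{plan} to transport this into a non-Artinian cyclic essential extension of $N$ inside $E(N)$, without carrying it out. That missing construction is the entire content of the theorem in the noncommutative setting, and nothing you have written forces it to exist.

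The paper resolves this by a direct and uniform construction that bypasses your central/non-central dichotomy. Set $I=S(\theta+a)\theta$. Then $S/I$ is cyclic, $S\theta/I\simeq S/S(\theta+a)=M$, and $(S/I)/(S\theta/I)\simeq S/S\theta$. The hypothesis $a\notin R^d$ is used precisely here: writing $S=R\oplus R\theta\oplus I$ and using the $x$-degree on $R=R^d[x;\partial]$ one checks that $S/I$ is $\theta$-torsionfree and that $S\theta/I$ is essential in $S/I$ (the inequality $\deg_x(g_0+d(g_1))<\deg_x(g_1a)$ needs $\deg_x(a)>0$). Now $(\diamond)$ applied to the cyclic essential extension $S/I$ of the Artinian module $M$ forces $S/I$ to be Artinian, hence its quotient $S/S\theta\simeq {}_SR$ is Artinian. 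The lattice isomorphism you already know (your slice $R=R^d[x;\partial]$) then says $R^d$ is left Artinian, hence a division ring. In other words, the ``finitely generated essential extension of $N$ having infinite length'' you were searching for is exactly $S/S(\theta+a)\theta$ whenever $R^d$ fails to be a division ring; your central-case argument is a pleasant shortcut that avoids $(\diamond)$, but it does not generalise, and the paper's construction is what closes the gap.
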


In order to prove the last Theorem we need an elementary Lemma which will be proven first. 

\begin{lem}\label{lemma_rd} Let $R$ be a domain over a field $K$ of characteristic $0$ and $d$ a locally nilpotent 
derivation such that there exists $x\in R$ with $d(x)=1$. Set $S=R[\theta;d]$.
Then the lattice $\mathcal{L}({_SR})$ of $d$-stable left ideals of $R$ and the lattice $\mathcal{L}(R^d)$ of left ideals 
of $R^d$ are isomorphic. The lattice isomorphism is given by the mutual inverse maps:
$$ I\mapsto I\cap R^d, \qquad \forall I\in \mathcal{L}({_SR}) \qquad \mbox{ and } \qquad J\mapsto RJ, \qquad  \forall 
J\in \mathcal{L}(R^d).$$
\end{lem}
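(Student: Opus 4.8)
The plan is to exploit the hypothesis $d(x)=1$ together with local nilpotence to set up a "non-commutative slice" decomposition of $R$ over its ring of constants $R^d$, and then to read off both the well-definedness of the two maps and the fact that they are mutually inverse. The computational engine throughout is the single identity $d^{\,n}(x^n)=n!$, together with $d^{\,j}(x^n)=0$ for $j>n$ (both immediate by induction from $d(x)=1$, using only that $x$ commutes with its own powers, and that $\mathrm{char}\,K=0$ makes $n!$ invertible).

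First I would check that the two maps are well defined and inclusion preserving. If $I\in\mathcal{L}({_SR})$ then $I\cap R^d$ is visibly a left ideal of $R^d$; conversely, for a left ideal $J$ of $R^d$ the set $RJ$ is a left ideal of $R$, and it is $d$-stable because $d(rj)=d(r)j+r\,d(j)=d(r)j\in RJ$ for $r\in R$ and $j\in J\subseteq R^d$. Both assignments clearly preserve inclusions, so it suffices to prove that they are mutually inverse: an inclusion-preserving bijection with inclusion-preserving inverse is automatically a lattice isomorphism.

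The technical heart is the decomposition of $R$ as the free right $R^d$-module $\bigoplus_{k\ge 0} x^k R^d$. For existence I would induct on the $d$-degree $\deg_d(a)=\max\{n : d^{\,n}(a)\neq 0\}$, which is finite by local nilpotence: if $b:=d^{\,n}(a)$ with $n=\deg_d(a)\ge 1$, then $b\in R^d$ and $d^{\,n}\!\big(a-\tfrac{1}{n!}x^nb\big)=b-\tfrac{1}{n!}d^{\,n}(x^n)\,b=0$, using $d^{\,n}(x^nb)=d^{\,n}(x^n)\,b=n!\,b$ since $b\in R^d$; hence $a-\tfrac{1}{n!}x^nb$ has strictly smaller $d$-degree and the induction closes, the base case $\deg_d(a)=0$ being $a\in R^d$. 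Uniqueness follows by applying $d^{\,n}$ to a relation $\sum_{k=0}^n x^kc_k=0$ with $c_k\in R^d$: since $d^{\,n}(x^k)=0$ for $k<n$, this gives $n!\,c_n=0$, whence $c_n=0$ as $R$ is a domain of characteristic $0$, and one descends. Tracking the coefficients yields the closed form $c_k=\tfrac{1}{k!}\,\pi\big(d^{\,k}(a)\big)$, where $\pi\colon R\to R^d$, $\pi(a)=\sum_{i\ge 0}\tfrac{(-1)^i}{i!}x^i d^{\,i}(a)$ (a finite sum, by local nilpotence), is the projection onto the $k=0$ component; one checks directly that $\pi(R)\subseteq R^d$, that $\pi|_{R^d}=\mathrm{id}$, and that $\pi$ is right $R^d$-linear, i.e. $\pi(rj)=\pi(r)j$ for $j\in R^d$, using $d^{\,i}(rj)=d^{\,i}(r)j$.

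With this in hand the two equalities are short. For $R(I\cap R^d)=I$ with $I$ $d$-stable, the inclusion $\subseteq$ is clear; for $\supseteq$ I write $a\in I$ as $a=\sum_k x^kc_k$ and observe that $c_k=\tfrac{1}{k!}\pi\big(d^{\,k}(a)\big)\in I\cap R^d$, since $d$-stability gives $d^{\,j}(a)\in I$ for all $j$ while $\pi$ only left-multiplies these by powers of $x$ inside the left ideal $I$; hence $a\in R(I\cap R^d)$. For $(RJ)\cap R^d=J$ the inclusion $\supseteq$ is immediate, and for $\subseteq$ I take $a\in(RJ)\cap R^d$, write $a=\sum_m r_mj_m$ with $j_m\in J$, and apply $\pi$: as $a\in R^d$ this gives $a=\pi(a)=\sum_m\pi(r_m)j_m$ with each $\pi(r_m)\in R^d$, so $a\in J$ because $J$ is a left ideal of $R^d$. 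The main obstacle, and the only place where non-commutativity genuinely bites, is the bookkeeping of sides: the powers of $x$ must sit on the \emph{left} of the constants, so that $x^kc_k$ lands in $R\cdot(I\cap R^d)$ and so that $\pi$ is linear in the correct ($R^d$-)variable on the right; the identity $d^{\,n}(x^n)=n!$ is exactly what makes this one-sided "integration" succeed.
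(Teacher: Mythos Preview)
Your proof is correct and follows the same underlying strategy as the paper: both establish and exploit the decomposition $R=\bigoplus_{k\ge 0}x^kR^d$ to verify $I=R(I\cap R^d)$ and $(RJ)\cap R^d=J$ by reading off coefficients. The difference is only in how the decomposition is obtained and how coefficients are extracted. The paper invokes Bavula's result that $R=R^d[x;\delta]$ (with $\delta=[x,-]$) for the decomposition, then peels off the \emph{top} coefficient of $\gamma=\sum_{i}x^ia_i\in I$ by applying $(n!)^{-1}d^{\,n}$ and descends by induction on degree; for the second identity it simply compares coefficients in the skew polynomial ring. You instead prove the decomposition from scratch via $d$-degree induction and package coefficient extraction into the explicit projection $\pi(a)=\sum_{i\ge 0}\tfrac{(-1)^i}{i!}x^id^{\,i}(a)$ onto the \emph{constant} term. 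Your route is more self-contained and makes the mechanism transparent (that $\pi$ sends $I$ into $I\cap R^d$ because it only left-multiplies $d$-iterates of elements of $I$ by powers of $x$, and that $\pi$ is right $R^d$-linear); the paper's appeal to Bavula buys a shorter write-up.
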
 

\begin{proof}
Let $R$ be a domain over a field $K$ of characteristic zero and $d$ a $K$-linear derivation such that there exists $x\in 
R$ with $d(x)=1$. Bavula showed in \cite[Lemma 2.1]{Bavula} that  $R=R^d[x;\delta]$ where $\delta(r)=xr-rx$ for all 
$r\in R^d$. In particular elements of $R$ are polynomials  in $x$ with coefficients $a_i\in R^d$.

Let $I$ be a $d$-stable left ideal of $R$. 
For any element $\gamma=\sum_{i=0}^n x^ia_i \in I$, with $a_i\in R^d$, we have $(n!)^{-1}\theta^n\cdot \gamma = a_n \in 
I\cap R^d$ and therefore $x^na_n=(n!)^{-1}x^n\theta^n\cdot \gamma \in R(I\cap R^d)$. By induction on the degree of elements it follows that 
 $I=R(I\cap R^d)$.

On the other hand if $J$ is a left ideal of $R^d$ and $a \in R^d\cap (RJ)$, then $a = \sum_{i=0}^n x^ia_i\in RJ$ with 
$a_1, \ldots, a_n\in J$. As $R=R^d[x;\delta]$ is a differential operator ring, by comparing the coefficients of $x$ we 
have $a=a_0\in J$. Hence $R^d\cap (RJ)=J$.
\end{proof}

\begin{proof}[Proof of Theorem \ref{lnd_artinian}]
Let $a\in R\setminus R^d$ be any element and consider $I=S(\theta+a)\theta$. We first show that $S/I$ is $\theta$-torsionfree.
Since $S=R\oplus S(\theta+b)$ for any $b\in R$, we have 
$$S = R \oplus S\theta = R\oplus R\theta \oplus I.$$ In particular any element of 
$S/I$ can be uniquely written  as $\gamma=g_0 + g_1\theta + I$ with $g_0,g_1 \in R$. Then 
\begin{equation}\theta\gamma = g_0\theta + d(g_0) + g_1\theta^2 + d(g_1)\theta + I = d(g_0) + (g_0 + d(g_1) - g_1a)\theta + I.\end{equation}
Suppose $\theta\gamma = I$, then $d(g_0)=0$, \textit{i.e.} 
\begin{equation} g_0\in R^d \qquad \mbox{and} \qquad g_0+d(g_1)=g_1a.\end{equation} 
As mentioned before,  $R=R^d[x;\delta]$ where $\delta(r)=xr-rx$ for all $r\in R^d$.   Let us denote the degree  of $g\in R$, considered as a polynomial in $x$, by 
$\mathrm{deg}_x(g)$. The constant polynomials are precisely the elements of $R^d$.
Moreover  $\mathrm{deg}_x(d(g))<\mathrm{deg}_x(g)$ holds for $g\in R\setminus\{0\}$ (where by definition $\mathrm{deg}_x(0) := -\infty$). 
Hence if $g_1$ were not zero, then
\begin{equation}\mathrm{deg}_x(g_0+d(g_1)) \leq \mathrm{deg}_x(g_1)< \mathrm{deg}_x(g_1) + \mathrm{deg}_x(a) = \mathrm{deg}_x(g_1a),\end{equation}
as $R$ is a domain and $a\not\in R^d$. This however would contradict $g_0+d(g_1)=g_1a$. Thus 
$g_1=0$ and $g_0=0$, \textit{i.e.} $\gamma=0$. This shows that $S/I$ is $\theta$-torsionfree. 

Next we will show that $S\theta/I$ is an essential left $S$-submodule of $S/I$. Hence let $U$ be a non-zero 
left $S$-submodule of $S/I$. Choose a non-zero element $\gamma=g_0+g_1\theta + I\in U$ with $\mathrm{deg}_x(g_0)$ 
being minimal.  Suppose $g_0\neq 0$. The fact that $\theta\gamma = d(g_0) + g_1'\theta + I$ is non-zero and belongs to 
$U$ for some $g_1'\in R$ with $\mathrm{deg}_x(d(g_0))<\mathrm{deg}_x(g_0)$ yields a contradiction to the choice of 
$\gamma$. Thus $g_0=0$ and $\gamma=g_1\theta +I \in U\cap S\theta/I$ is a non-zero element; showing that $S\theta/I$ is 
essential in $S/I$.

Suppose now that  $S$ satisfies $(\diamond)$ and that $S/S(\theta+a)$ is Artinian as left $S$-module for some $a\in R\setminus R^d$. Set $I=S(\theta + a)\theta$.  Then $S/I$ is a finitely generated essential extension of the Artinian left $S$-module $S/S(\theta+a)\simeq S\theta/I$. By property $(\diamond)$ $S/I$ is Artinian and so is its factor module $S/S\theta$. 
Since $S/S\theta$ is isomorphic to $R$ as left $S$-module, as mentioned in the introduction, $R$ is an Artinian left $S$-module. By Lemma \ref{lemma_rd}, there exists a lattice isomorphism between the lattice of $d$-stable left ideals of $R$ and left ideals of $R^d$. Hence $R^d$ is left Artinian and therefore  a division ring as it is also a domain.
\end{proof}

Consider  $R=A_1(\mathbb{C})[x]$, which is a Noetherian domain of Krull dimension $2$ that satisfies property $(\diamond)$, because any maximal ideal $\m$ of the centre of $R$ is of  the form $\m=\langle x-\lambda\rangle$, with $\lambda \in \CC$; the quotient ring  $R/\m \simeq A_1(\mathbb{C})$ does  satisfy property $(\diamond)$ and thus  by \cite[Proposition 1.6]{injective_modules_over_down-up_algebras}, $R$ 
satisfies property $(\diamond)$. Let $d$ be the derivation $\frac{\partial}{\partial x}$ of $R$ and set 
$$S=R[y;d]=A_1(\mathbb{C})[x][y;\frac{\partial}{\partial x}],$$ \textit{i.e.} $S\simeq A_2(\mathbb{C})$. 
Then $d$ is locally nilpotent and $d(x)=1$. Stafford showed in  \cite[Theorem 1.1]{nonholonomic_modules_over_weyl_algebras_and_enveloping_algebras} that there  exists an element $a\in R$ such that $S/S(\theta+a)$ is simple. Since $R^d=A_1(\mathbb{C})$ is not a division ring, $S$ 
does not satisfy $(\diamond)$ by Theorem \ref{lnd_artinian}.


\section{ Commutative Noetherian $d$-primitive rings}
A ring $R$ is called \emph{$d$-simple} if the only $d$-ideals of $R$ are $0$ and $R$.

\begin{lem}\label{Artinian-general} Let $R$ be a commutative Noetherian domain with derivation $d$. Set $S=R[\theta;d]$. Then
 $S/S\theta$ is Artinian as left $S$-module if and only if $R$ is $d$-simple.
\end{lem}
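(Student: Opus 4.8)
The plan is to translate the statement about the cyclic module $S/S\theta$ into a statement about the lattice of $d$-ideals of $R$, and then exploit the fact that $R$ is a Noetherian domain. As recalled in the introduction, $S/S\theta\simeq R$ as a left $S$-module, and the lattice $\mathcal{L}({_S{S/S\theta}})$ of left $S$-submodules is isomorphic to the lattice $\mathcal{L}({_SR})$ of $d$-stable left ideals of $R$. Since $R$ is commutative, these $d$-stable ideals are exactly the $d$-ideals. Consequently, $S/S\theta$ is Artinian as a left $S$-module if and only if the poset of $d$-ideals of $R$ satisfies the descending chain condition. This reformulation is the conceptual heart of the argument, and once it is in place both implications become elementary.

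For the implication ($\Leftarrow$) I would argue directly: if $R$ is $d$-simple, then by definition the only $d$-ideals of $R$ are $0$ and $R$, so $\mathcal{L}({_SR})$ is a two-element lattice, which trivially satisfies the descending chain condition. Hence $S/S\theta$ is Artinian.

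For the implication ($\Rightarrow$) I would proceed by contraposition. Assume $R$ is not $d$-simple and fix a proper nonzero $d$-ideal $I$. The first routine step is to check that each power $I^n$ is again a $d$-ideal: from the Leibniz rule and $d(I)\subseteq I$ one gets $d(I^n)\subseteq I^n$ by induction. Thus $R\supseteq I\supseteq I^2\supseteq I^3\supseteq\cdots$ is a descending chain of $d$-ideals, and it remains only to see that it does not stabilise; an infinite strictly descending chain in $\mathcal{L}({_SR})$ then shows that $S/S\theta$ is not Artinian.

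The main obstacle is establishing the strictness of this chain, which is where the Noetherian domain hypothesis enters. I expect to show $I^{n}\supsetneq I^{n+1}$ for every $n$ by a Nakayama-type argument. Suppose $I^{n}=I^{n+1}$ for some $n$ and set $J=I^{n}$, so that $IJ=J$ with $J$ finitely generated since $R$ is Noetherian. The determinant trick (Cayley--Hamilton) then yields an element $r\in I$ with $(1-r)J=0$. As $I$ is proper we have $1\notin I$, hence $1-r\neq 0$, and since $R$ is a domain this forces $J=I^{n}=0$; but $R$ is a domain and $I\neq 0$, so $I^{n}\neq 0$, a contradiction. (Alternatively, one could invoke the Krull intersection theorem to rule out stabilisation.) This contradiction establishes the strict descent, completing the contrapositive and hence the lemma.
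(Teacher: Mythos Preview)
Your proposal is correct and follows essentially the same route as the paper: both reduce to the lattice identification $\mathcal{L}({_S S/S\theta})\cong \mathcal{L}({_S R})$ and then analyse the chain $I\supseteq I^2\supseteq\cdots$ of powers of a $d$-ideal, using a Nakayama-type argument in the commutative Noetherian domain $R$. The only cosmetic difference is that the paper argues directly (the chain must stabilise at some $I^n=I^{2n}$, hence $I^n$ is idempotent, hence $I^n\in\{0,R\}$), while you argue by contraposition via $I^n=I^{n+1}$ and the determinant trick; these are equivalent packagings of the same idea.
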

\begin{proof} By the preceding remark, $S/S\theta$ is isomorphic to $R$ as $S$-modules. 
Hence if we suppose $S/S\theta$ to be Artinian as a left $S$-module, then so is $R$ as a left $S$-module, \textit{i.e.} $R$  
satisfies the descending chain condition on $d$-stable left ideals.

Consider a $d$-ideal $I$ of $R$. The descending chain $I\supseteq I^2\supseteq I^3 
\supseteq \cdots $ stops and there exists $n\geq 0$ such that $I^n = I^{2n}$, \textit{i.e.} $I^n$ is an idempotent ideal of $R$. 
As the only idempotent ideals of a commutative Noetherian domain are $0$ and $R$ (see \cite[Corollary 5.4]{Eisenbud}), 
we have $I^n=0$ and hence $I=0$ as $R$ is a domain or $I^n=R$, \textit{i.e.} $I=R$ - showing that $R$ is $d$-simple. On the other 
hand if $R$ is $d$-simple, then $R$ is simple as a left $S$-module and hence $S/S\theta$ is simple and therefore Artinian as a left $S$-module.
\end{proof}

We will show that if $R$ has no $\ZZ$-torsion and is $d$-primitive, \textit{i.e.} $R$ contains a maximal ideal $\m$ which does not contain any non-zero $d$-ideals, then $S=R[\theta;d]$ satisfying $(\diamond)$ will force $R$ to be $d$-simple. 

The following Proposition appears in \cite[Proposition 3.1]{GoodearlWarfield82} where the authors refer to  a paper by Hart \cite{Hart}. However, since in \cite{Hart} the ring $S$ is supposed to be simple, we will include Hart's proof  for the convenience of the reader.

\begin{prop}[Hart]\label{simple-general} Let $R$ be a commutative ring and  $S=R[\theta;d]$ for some derivation $d$ of $R$. 
Suppose that  $\m$ is a maximal ideal of $R$ that is not $d$-stable and satisfies $\mathrm{char}(R/\m)=0$. Then $S\m$ is a maximal left 
ideal of $S$.
\end{prop}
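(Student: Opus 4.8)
The plan is to show that the left $S$-module $M:=S/S\m$ is simple, which is equivalent to $S\m$ being a maximal left ideal. Write $k=R/\m$, a field of characteristic zero. Since $\m$ is not $d$-stable I can fix $b\in\m$ with $c:=d(b)\notin\m$; as $k$ is a field, the class $\bar c$ is a nonzero, hence invertible, element of $k$. This single element $b$ will play the role of a lowering operator.

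First I would install the order filtration $F_nS=\bigoplus_{i\le n}R\theta^i$ together with its image filtration $M_n=(F_nS+S\m)/S\m$ on $M$, whose associated graded ring is the commutative polynomial ring $R[\xi]$, with $\xi$ the symbol of $\theta$. The technical heart of the argument is the following Key Lemma: the leading coefficient of every nonzero element of $S\m$ lies in $\m$; equivalently $\mathrm{gr}(S\m)\subseteq\m[\xi]$. Granting this, the order-$0$ case gives $R\cap S\m=\m$, so $M_0=R\bar 1\cong k$ is nonzero and in particular $S\m$ is proper; and for each $n$ the quotient $M_n/M_{n-1}$ is a cyclic $R/\m$-module generated by the image of $\theta^n\bar 1$, which is nonzero (again by the Key Lemma), hence $\cong k$. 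Thus $\{\theta^n\bar 1:n\ge 0\}$ is a $k$-basis of $M$ and the filtration degree is well behaved.

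Next I would exhibit $b$ as a lowering operator. Since $b\bar 1=0$ and $[\theta,b]=c$ in $S$, the commutation formula $b\theta^n=\sum_i\binom{n}{i}(-1)^i\theta^{n-i}d^i(b)$ yields, modulo $M_{n-2}$, the congruence $b\cdot\theta^n\bar 1\equiv -n\,c\,\theta^{n-1}\bar 1$, so that in $M_{n-1}/M_{n-2}$ one has $[b\theta^n\bar 1]=-n\bar c\,[\theta^{n-1}\bar 1]$. Here characteristic zero is essential: the integer factor $-n$ is invertible in $k$ for every $n\ge 1$, so $b$ maps $M_n$ into $M_{n-1}$ and lowers the degree by exactly one on nonzero elements. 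Consequently, for any $0\neq v\in M$ of degree $n$, the element $b^n v$ lies in $M_0=k\bar 1$ and its leading coefficient is $(-1)^n n!\,\bar c^{\,n}$ times that of $v$, hence a nonzero multiple of $\bar 1$. Because $R$ acts on $M_0=k\bar 1$ through the surjection $R\to k$, we recover $\bar 1\in R b^n v\subseteq Sv$, whence $Sv=S\bar 1=M$. This proves that $M$ is simple and the Proposition follows.

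The main obstacle is the Key Lemma for a general commutative ring. When $R$ is a domain it is immediate: multiplying by a fixed element $a\in\m$ never lowers the $\theta$-order, so no cancellation of leading terms can occur and the leading coefficient of $sa$ is visibly in $\m$. In general the danger is that in $w=\sum_j s_j a_j$ the top-order terms cancel and a unit is exposed one order lower through the derivatives $d(a_j)$. The device that rescues this is to differentiate the cancellation relation: if $\sum_j\lambda(s_j)a_j=0$ records the vanishing of the top symbols, then $\sum_j\lambda(s_j)d(a_j)=-\sum_j d(\lambda(s_j))a_j\in\m$, so the newly exposed coefficient is again forced into $\m$. I would organise this as an induction on the least $N$ for which $w$ admits a representation $\sum_j s_j a_j$ with all $\mathrm{ord}(s_j)\le N$, lowering $N$ whenever the top cancels; the coefficients that appear are integers, so characteristic zero prevents them from vanishing. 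Alternatively one may first localise at $\m$ to assume $R$ local, so that $d(b)$ becomes an actual unit, which streamlines the bookkeeping.
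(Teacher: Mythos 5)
Your argument is correct, but it takes a genuinely different route from the paper's. The paper supposes $I\supsetneq S\m$ is a left ideal, takes $f\in I\setminus S\m$ of minimal degree $n>0$ with leading coefficient $a\notin\m$, notes that $fm-mf\in S\m$ for every $m\in\m$, reads off that its top coefficient $n\,d(m)\,a$ lies in $\m$, and concludes $d(\m)\subseteq\m$, a contradiction; you instead prove simplicity of $S/S\m$ directly by exhibiting the $k$-basis $\{\theta^n\bar 1\}$ and using $b\in\m$ with $d(b)\notin\m$ as a lowering operator. Both proofs rest on your ``Key Lemma'' (leading coefficients of elements of $S\m$ lie in $\m$) --- the paper invokes it implicitly in the step just quoted --- but you vastly overestimate its difficulty: since $S$ is free as a \emph{right} $R$-module on $\{\theta^n\}$, one has $S\m=\bigoplus_n\theta^n\m$, and comparing the top coefficient of $\sum_n\theta^n c_n$ with that of its left-handed rewriting $\sum_n a_n\theta^n$ gives $a_N=c_N\in\m$ at once; no induction, no differentiation of cancellation relations, no localisation, and no hypothesis on the characteristic are needed for this step. (Your remark that the domain case is ``immediate'' is also slightly off: the top terms of a sum $\sum_j s_jm_j$ can cancel even over a domain, so the single-product observation does not suffice there either.) Granting the Key Lemma, your filtration bookkeeping, the congruence $b\cdot\theta^n\bar 1\equiv -n\bar c\,\theta^{n-1}\bar 1 \pmod{M_{n-2}}$, and the descent of any nonzero element to a unit multiple of $\bar 1$ are all correct, and characteristic zero enters exactly where you say it does. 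What your approach buys is an explicit picture of $S/S\m$ as a copy of $k[\theta]$ on which $b$ acts as a degree-lowering operator; the cost is considerably more machinery than the paper's short minimal-degree contradiction.
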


\begin{proof}
Let $I$ be any left ideal of $S$ properly containing $S\m$. Let $f\in I\setminus S\m$ be an element of minimal degree $n$ with leading coefficient $a\not\in \m$. Suppose $n>0$.
For any $m\in \m$ we have $fm\in S\m\subseteq I$, hence $fm-mf \in I$ which has degree less than the one of $f$. By the minimality of the degree of $f$,  $fm-mf \in S\m$.
Note that the leading coefficient of $fm-mf$ belongs to $\m$ and equals $nd(m)a$. Since $R/\m$ is a field of characteristic zero and $a\not\in\m$,  $d(m)\in \m$ for all $m\in \m$. Hence  $d(\m)\subseteq \m$, contradicting our assumption on $\m$. Hence $f=a$ must be a non-zero constant polynomial. Since $a\not\in \m$, it  has an inverse mod $\m$. Thus $I=S$.
\end{proof}

Proposition \ref{simple-general} and Lemma \ref{Artinian-general} show that if $R$ is a commutative Noetherian 
domain that is not $d$-simple and has a maximal ideal $\m$ that is not $d$-stable, then $S/S\m\theta$ is a cyclic left 
$S$-module which is an extension of the simple left $S$-module $S/S\m\simeq S\theta/S\m\theta$ such that $S/S\theta$ is 
not Artinian.  Moreover, this extension is essential as the following Lemma shows:

\begin{lem}\label{Lemma_prime}
 Let $R$ be a commutative ring with derivation $d$  and set $S=R[\theta;d]$.
 If $\pp$ is a prime ideal of $R$ that does not contain any non-zero $d$-ideal, then $S\theta/I$ is an essential left $S$-submodule of $S/I$ where $I=S\pp \theta$.
 \end{lem}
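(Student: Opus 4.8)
The plan is to analyse the $S$-linear projection $\phi\colon S/I\to R$ sending $\overline{s}$ to the action $s\cdot 1$ of $s$ on $1\in R$. Since $\theta\cdot 1=d(1)=0$ we have $I\cdot 1=S\pp\theta\cdot 1=0$, so $\phi$ is well defined, and $\Ker{\phi}=S\theta/I$ because $s\cdot 1$ is precisely the constant coefficient of $s$ (and $I\subseteq S\theta$). Thus $S\theta/I$ is essential in $S/I$ exactly when every nonzero $\overline\gamma\in S/I$ can be pushed into $\Ker{\phi}$ nontrivially, i.e.\ when there is $g\in S$ with $0\neq g\overline\gamma\in S\theta/I$; producing such a $g$ for an arbitrary nonzero element then yields a nonzero element of $U\cap S\theta/I$ for every nonzero submodule $U$.

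First I would dispose of the case $\phi(\gamma)=a_0=0$, where $\overline\gamma$ already lies in $S\theta/I$. If $a_0\neq 0$, then $S\cdot a_0$ is the nonzero $d$-ideal of $R$ generated by $a_0$, which by hypothesis is \emph{not} contained in $\pp$; hence for a suitable $s_0\in S$ the image $\phi(s_0\overline\gamma)=s_0\cdot a_0$ lies outside $\pp$, and after replacing $\overline\gamma$ by $s_0\overline\gamma$ I may assume $a_0=\phi(\gamma)\notin\pp$. Next I would choose a representative $\gamma=\sum_{m=0}^{N}a_m\theta^m$ of $\overline\gamma$ of least $\theta$-degree; its leading coefficient then satisfies $a_N\notin\pp$, for this is immediate when $N=0$, and for $N\ge 1$ it follows because $a_N\in\pp$ would make $a_N\theta\in S\pp\theta=I$, so that $\gamma-\theta^{N-1}a_N\theta$ would be a representative of strictly smaller degree, contradicting minimality (and leaving the constant coefficient $a_0$ unchanged).

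The decisive step is to test $\overline\gamma$ against the single operator $g=d(a_0)-a_0\theta$. Commutativity of $R$ gives $\phi(g\overline\gamma)=g\cdot a_0=d(a_0)a_0-a_0d(a_0)=0$, so $g\overline\gamma\in S\theta/I$ automatically; the content is to show $g\overline\gamma\neq 0$, i.e.\ $g\gamma\notin I$, and this is where I expect the main obstacle. A direct computation shows that $g\gamma$ has $\theta$-degree $N+1$ with leading coefficient $-a_0a_N$, so $g\gamma=h\theta$ for some $h\in S$ of degree $N$ with the same leading coefficient $-a_0a_N$. Here I would use the description $S\pp=\bigoplus_{n\ge 0}\theta^n\pp$ in the right-hand basis, from which every element of $S\pp$ has leading coefficient in $\pp$; since right multiplication $S\xrightarrow{\ \cdot\theta\ }S\theta$ is an isomorphism carrying $S\pp$ onto $I=S\pp\theta$, membership $g\gamma\in I$ would force $h\in S\pp$ and hence $-a_0a_N\in\pp$. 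As $\pp$ is prime and $a_0,a_N\notin\pp$, this is impossible, so $0\neq g\overline\gamma\in S\theta/I$, proving that $S\theta/I$ is essential.

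The step I anticipate to be most delicate is precisely this leading-coefficient control of $I=S\pp\theta$: the interplay between the identification $S\pp=\bigoplus\theta^n\pp$ (which keeps leading coefficients inside $\pp$) and the primeness of $\pp$ is what makes the explicit choice $g=d(a_0)-a_0\theta$ succeed. Without arranging both $a_0\notin\pp$ (via the $d$-ideal hypothesis) and $a_N\notin\pp$ (via the least-degree reduction), the product $a_0a_N$ could fall into $\pp$ and the argument would break down.
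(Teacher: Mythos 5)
Your proof is correct and follows essentially the same route as the paper's: reduce to a coset represented by a polynomial whose constant and leading coefficients lie outside $\pp$ (using the no-nonzero-$d$-ideal hypothesis for the constant term and degree minimality for the leading term), then multiply by $\pm(a_0\theta - d(a_0))$ and invoke primeness of $\pp$. The only differences are organizational: you package the constant coefficient as the $S$-linear map $\phi$ and minimize degree over representatives of a fixed coset rather than over all of $U$, which lets you bypass the paper's explicit computation of $\theta^m h$.
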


\begin{proof} The aim is to show that any non-zero left $S$-submodule $U$ of $S/I$ contains a non-zero element from $S\theta/I$. 
Suppose there exists a non-zero element  $f+I \in U$, such that the leading and constant coefficients of $f$ do not belong to $\pp$.  If $f$ is a constant, say $f=a$, 
$$(a\theta - d(a))a = a^2 \theta \not\in S\pp\theta = I,$$
as $a\not\in\pp$. Otherwise write $f=a\theta^n + g\theta + b$ with  $g\in S$ an element of degree less than $n-1$ and $a, b \not\in \pp$. 

\begin{eqnarray*}
(b\theta- d(b))f  &=& b\theta a\theta^n - d(b)a\theta^n  + (b\theta - d(b))g\theta + b\theta b - d(b)b \\
 &=& ba\theta^{n+1} + (bd(a) - d(b)a)\theta^n  + (b\theta - d(b))g\theta + b^2\theta +bd(b) - d(b)b \\
 &=& ba\theta^{n+1} + (bd(a) - d(b)a)\theta^n  + ((b\theta - d(b))g+b^2)\theta 
\end{eqnarray*}
Since $a,b \not\in \pp$ and $\pp$ is a prime ideal, $ab\not\in \pp$. Hence 
the leading coefficient of $(b\theta - d(b))f$ does not belong to $\pp$ and we get that $(b\theta - d(b))f \not\in S\pp\theta = I$.
Therefore $$0\neq (b\theta -d(b))f + I \in U\cap (S\theta/I).$$
The rest of the proof consists in proving the existence of such element $f+I$ in $U$.
Let $h+I$ be a non-zero element of $U$ of minimal degree. Let  $h=\sum_{i=0}^n a_i \theta^i$. 
Clearly if $a_0=0$, then $h\in S\theta$, hence $h+I \in U\cap (S\theta/I)$ and we are done.

Let $a_0\neq 0$ and suppose first that $n=0$, \textit{i.e.} $h=a$ is a constant polynomial. Then there exists a least integer $m\geq 0$ such that $d^m(a)\not\in \pp$ as otherwise the elements $d^i(a)$, $i\geq 0$ would generate a non-zero $d$-ideal in $\pp$ contradicting our assumption on $\pp$.
By induction one proves (in case $m>0$):
\begin{equation}
 \theta^m a = \left( \sum_{i=0}^{m-1}\theta^{m-1-i} d^i(a) \right)\theta + d^m(a)
\end{equation}
Since $a, d(a), d^2(a), \ldots, d^{m-1}(a)$ belong to $\pp$, we have that $\theta^m a - d^m(a) \in I$. In other words
\begin{equation}
 f+I := \theta^m (a+I) = d^m(a) + I \in U
\end{equation}
is an element that can be represented by a polynomial whose leading and constant coefficients are not in $\pp$.

Suppose now that the degree of $h$ is  $n>0$. Then $a_n\not\in \pp$ as  otherwise
$\theta^{n-1}a_n\theta \in I$ and $h-\theta^{n-1}a_n\theta$ would be an element of degree less than the degree of $h$,  contradicting our minimality assumption. 
Let  $m\geq 0$ be the least integer such that $d^m(a_0)\not\in \pp$. Write $h=a_n\theta^n + g\theta + a_0$ with $g$ a polynomial of  degree less than $n-1$. Thus 
\begin{equation}
f+I:=  \theta^m h + I = \theta^ma_n\theta^n + \theta^mg\theta + \theta^m a_0 + I = a_n \theta^{m+n} + \tilde{g}\theta + d^{m}(a_0) + I \in U,
\end{equation}
for some polynomial $\tilde{g}$ of degree less than $n+m-1$. Hence $f+I\in U$ is represented by an element whose leading and constant  coefficients do not belong to  $\pp$.
\end{proof}

The next result follows from  Lemmas \ref{Artinian-general}, \ref{Lemma_prime}  and Proposition \ref{simple-general}.

\begin{thm}\label{essentiality_Theorem} Let $R$ be a commutative Noetherian domain with derivation $d$ such that $R$ is not $d$-simple. Suppose that $R$ has a maximal ideal $\m$ that does not contain any non-zero $d$-ideal and such that $\mathrm{char}(R/\m)=0$. Then $S/S\m\theta$ is a non-artinian essential extension of the simple module $S\theta/S\m\theta$ where $S=R[\theta; d]$.
\end{thm}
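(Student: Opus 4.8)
The plan is to assemble Theorem \ref{essentiality_Theorem} directly from the three preceding results, since each supplies one of the required properties of the module $S/S\m\theta$. First I would fix the setup: let $\m$ be a maximal ideal of $R$ that is not $d$-stable (equivalently, does not contain any non-zero $d$-ideal, as $R$ is a domain and $\m$ contains a non-zero $d$-ideal only if it is itself $d$-stable) with $\mathrm{char}(R/\m)=0$, and set $I=S\m\theta$. The module $S/I$ is clearly cyclic, hence finitely generated, being generated by $1+I$.

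Next I would identify the simple submodule. By Proposition \ref{simple-general}, $S\m$ is a maximal left ideal of $S$, so $S/S\m$ is a simple left $S$-module. The multiplication-by-$\theta$ map induces an isomorphism $S/S\m \xrightarrow{\sim} S\theta/S\m\theta = S\theta/I$ (one checks $S\m\theta$ is exactly the preimage of $I$ under right multiplication by $\theta$, using that $S$ is a free right $R$-module on the powers of $\theta$), so $S\theta/I$ is simple. Essentiality of $S\theta/I$ in $S/I$ is then immediate from Lemma \ref{Lemma_prime}, applied with the prime ideal $\pp=\m$, which indeed contains no non-zero $d$-ideal by hypothesis.

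It remains to verify that $S/S\m\theta$ is not Artinian. The key observation is that $S/S\theta$ is a factor of $S/I$, via the projection induced by the inclusion $S\m\theta \subseteq S\theta$; so if $S/I$ were Artinian, so would be $S/S\theta$. Since $R$ is not $d$-simple, Lemma \ref{Artinian-general} tells us that $S/S\theta \simeq R$ is not Artinian as a left $S$-module. Hence $S/I=S/S\m\theta$ cannot be Artinian either.

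The only genuinely delicate point I expect is the bookkeeping for the isomorphism $S/S\m \simeq S\theta/I$ and the identification of $S\m\theta$ as a submodule of $S\theta$: one must be careful that right multiplication by $\theta$ behaves well with the two-sided nature of the ideals involved and that $S\m\theta \subseteq S\theta$ genuinely yields the stated factor $S/S\theta$ after quotienting. Everything else is a direct citation of the three earlier results, so the proof is essentially a matter of recording which lemma delivers which property and checking that the hypothesis ``$\m$ contains no non-zero $d$-ideal'' simultaneously supplies the maximality of $S\m$ (Proposition \ref{simple-general}) and the essentiality (Lemma \ref{Lemma_prime}).
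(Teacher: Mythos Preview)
Your proof is correct and follows exactly the paper's approach: the paper simply states that the result follows from Lemma \ref{Artinian-general}, Lemma \ref{Lemma_prime}, and Proposition \ref{simple-general}, and you have spelled out precisely how. One small correction: your parenthetical claim that ``$\m$ contains a non-zero $d$-ideal only if it is itself $d$-stable'' is false in general (e.g.\ $\m=\langle x,y\rangle$ in $K[x,y]$ with $d=\partial/\partial x$ contains the $d$-ideal $\langle y\rangle$ but is not $d$-stable); fortunately you only need the true implication, namely that if $\m$ contains no non-zero $d$-ideal then $\m$ is not $d$-stable, and this is what feeds into Proposition \ref{simple-general}.
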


As a Corollary we obtain the following result:

\begin{thm}\label{essentiality}
Let $R$ be a commutative Noetherian domain such that  $R[\theta;d]$ satisfies $(\diamond)$ for some non-zero derivation $d$.
If $R$ is $d$-primitive and has no $\ZZ$-torsion, then  $R$ is $d$-simple.
\end{thm}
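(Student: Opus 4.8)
The plan is to obtain the result by contradiction, using Theorem~\ref{essentiality_Theorem} as the engine. So I would suppose that $R$ is \emph{not} $d$-simple and derive a violation of property $(\diamond)$. Because $R$ is $d$-primitive there is, by definition, a maximal ideal $\m$ of $R$ containing no non-zero $d$-ideal; this is the ideal I intend to hand to Theorem~\ref{essentiality_Theorem}, once its hypotheses are checked. Note that $R$ cannot be a field in this situation, since a field is trivially $d$-simple, so $\m\neq 0$.

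The only hypothesis of Theorem~\ref{essentiality_Theorem} that is not immediate is $\mathrm{char}(R/\m)=0$, and verifying it is where the ``no $\ZZ$-torsion'' assumption does its work; I expect this to be the main (indeed the only non-formal) step. I would argue it by contradiction as well: if $\mathrm{char}(R/\m)=p$ for some prime $p$, then $p\cdot 1_R\in\m$, and the absence of $\ZZ$-torsion gives $p\cdot 1_R\neq 0$, so $Rp$ is a non-zero ideal contained in $\m$. The key observation is that $Rp$ is $d$-stable: since $d(1_R)=0$ we have $d(p\cdot 1_R)=0$, whence $d(rp)=d(r)p\in Rp$ for all $r\in R$. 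Thus $Rp$ would be a non-zero $d$-ideal sitting inside $\m$, contradicting the choice of $\m$; therefore $\mathrm{char}(R/\m)=0$.

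At this point all hypotheses of Theorem~\ref{essentiality_Theorem} are in place: $R$ is a commutative Noetherian domain that is not $d$-simple, and $\m$ is a maximal ideal containing no non-zero $d$-ideal with $\mathrm{char}(R/\m)=0$. The theorem then yields the cyclic---hence finitely generated---left $S$-module $S/S\m\theta$, which is an essential extension of the simple module $S\theta/S\m\theta$ yet is not Artinian. Such a finitely generated non-Artinian essential extension of a simple module is exactly what property $(\diamond)$ forbids, so the assumption that $R$ is not $d$-simple is untenable, and $R$ must be $d$-simple.
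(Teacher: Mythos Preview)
Your proof is correct and follows essentially the same route as the paper's: use $d$-primitivity to pick $\m$, invoke the $\ZZ$-torsionfree hypothesis to rule out $\mathrm{char}(R/\m)=p>0$ via the $d$-ideal $pR\subseteq\m$, and then apply Theorem~\ref{essentiality_Theorem} to contradict $(\diamond)$. The paper argues the contrapositive in the same way, only more tersely.
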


\begin{proof}
By hypothesis $R$ contains a maximal ideal $\m$ which does not contain any non-zero $d$-ideals.
If $\mathrm{char}(R/\m)=p>0$, then $pR\subseteq \m$ is a non-zero $d$-ideal as 
$R$ is supposed to be $\ZZ$-torsionfree. Hence $\mathrm{char}(R/\m)=0$. 
If $S=R[\theta;d]$ satisfies $(\diamond)$, then  $S/S\m\theta$ cannot be a non-artinian essential extension of a simple module. Hence by Theorem \ref{essentiality_Theorem} $R$ must be $d$-simple.
\end{proof}


\section{Ore extensions of $K[x]$}

We apply the general results of the last section to $R=K[x]$ with $K$ a field of characteristic $0$. 
Denote by $A_1(K)=K[x][\theta; \frac{\partial}{\partial x}]$ the first Weyl algebra over $K$.

\begin{cor}\label{diamond_for_kx}
Let $K$ be a field of characteristic $0$ and $d$ a $K$-linear derivation of $K[x]$.
The following statements are equivalent for $S=K[x][\theta; d]$:
\begin{enumerate}
\item[(a)] $S$ satisfies property $(\diamond)$.
\item[(b)] $d=\lambda\frac{\partial}{\partial x}$ for some $\lambda \in K$.
\item[(c)] $S\simeq K[x,\theta]$ or $S\simeq A_1(K)$.
\item[(d)] $S$ is commutative or has Krull dimension $1$.
\end{enumerate}
\end{cor}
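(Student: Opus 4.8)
The plan is to reduce everything to the shape of the derivation and then prove a short chain of equivalences. Since a $K$-linear derivation of $K[x]$ is determined by the single polynomial $f:=d(x)$, with $d(g)=fg'$, condition $(b)$ says precisely that $f$ is a constant. I would prove $(a)\Leftrightarrow(b)$ as the substantive step, obtain $(b)\Leftrightarrow(c)$ by a direct structural computation, and close with $(c)\Leftrightarrow(d)$ via Krull dimension.

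For the easy directions I would argue as follows. If $f=0$ then $S=K[x][\theta;0]=K[x,\theta]$ is commutative Noetherian, so Matlis's theorem \cite{Matlis} gives $(\diamond)$; if $f=\lambda\neq0$, then replacing $\theta$ by $\lambda^{-1}\theta$ turns the relation $\theta x-x\theta=\lambda$ into the Weyl relation, whence $S\cong A_1(K)$, which satisfies $(\diamond)$ (for $K$ algebraically closed this is Proposition \ref{prop_positive} applied to the locally nilpotent $d$, and in general it is the known property of the first Weyl algebra). This simultaneously yields $(b)\Rightarrow(a)$ and $(b)\Rightarrow(c)$. Conversely, $S\cong K[x,\theta]$ forces $S$ commutative, hence $d(x)=0$ and $f=0$; while if $S\cong A_1(K)$ then $S$ is simple, so $f\neq0$ and $\deg f=0$ (otherwise, as below, an irreducible $q\mid f$ gives the proper nonzero two-sided ideal $S(q)$). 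Thus $(c)\Rightarrow(b)$.

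The core is $(a)\Rightarrow(b)$, which I would prove contrapositively assuming $\deg f\geq1$; the real work is an analysis of the $d$-stable ideals of $K[x]$. First, $(f)$ is a nonzero proper $d$-ideal, since $d(fh)=ff'h+f^2h'\in(f)$, so $K[x]$ is not $d$-simple. Because $K$ is infinite I may choose an irreducible $p$ with $p\nmid f$ and put $\m=(p)$; the decisive point is that $\m$ contains no nonzero $d$-ideal. Indeed, if $(g)\subseteq\m$ were a nonzero $d$-ideal, write $g=p^em$ with $p\nmid m$ and $e\geq1$; the condition $g\mid fg'$ reduces to $pm\mid f(ep'm+pm')$, which is impossible modulo $p$ since $p$ divides none of $f$, $e$, $p'$, $m$ (here $\mathrm{char}\,K=0$ gives $p\nmid e$ and $\gcd(p,p')=1$). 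Since $\mathrm{char}(K[x]/\m)=0$, Theorem \ref{essentiality_Theorem} then produces the cyclic non-Artinian essential extension $S/S\m\theta$ of the simple module $S\theta/S\m\theta$, so $S$ fails $(\diamond)$. I expect this $d$-ideal bookkeeping — producing a maximal ideal containing no nonzero $d$-ideal — to be the main obstacle.

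Finally $(c)\Leftrightarrow(d)$. The implication $(c)\Rightarrow(d)$ is immediate: $K[x,\theta]$ is commutative and $A_1(K)$ has Krull dimension $1$ \cite{GoodearlWarfield}. For $(d)\Rightarrow(c)$, a commutative $S$ again forces $d=0$ and $S=K[x,\theta]$; and if $\mathrm{Kdim}(S)=1$ then $f\neq0$ (else $S$ is commutative of Krull dimension $2$), and I would rule out $\deg f\geq1$ by exhibiting a length-two chain of primes. For an irreducible $q\mid f$ the ideal $(q)$ is $d$-stable and the derivation it induces on $L=K[x]/(q)$ vanishes, so $J=S(q)$ is a prime two-sided ideal with $S/J\cong L[\theta]$ (commutative); the prime $(\theta)$ of $L[\theta]$ then gives $0\subsetneq J\subsetneq P$, forcing the classical, hence the Gabriel--Rentschler, Krull dimension of $S$ to be at least $2$ \cite{GoodearlWarfield}, a contradiction. (Alternatively one may invoke the classification of the rings $K[x][\theta;d]$ in \cite{AwamiVandenBerghVanOystaeyen}.) Assembling these implications gives the full equivalence.
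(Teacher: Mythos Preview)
Your proof is correct, and the substantive step $(a)\Rightarrow(b)$ matches the paper's: both show that when $f=d(x)$ is non-constant, $K[x]$ is $d$-primitive (via a maximal ideal generated by an irreducible not dividing $f$) but not $d$-simple (via the $d$-ideal $(f)$), and then invoke Theorem~\ref{essentiality_Theorem}/Theorem~\ref{essentiality}. Your divisibility argument with a general irreducible $p$ is a mild generalisation of the paper's choice $p=x-\alpha$.

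The only real difference is organisational. The paper runs the cycle $(a)\Rightarrow(b)\Rightarrow(c)\Rightarrow(d)\Rightarrow(a)$ and closes it with the general fact that a Noetherian domain of Krull dimension $1$ has every proper cyclic module Artinian (citing \cite{GordonRobson} or \cite{Krause}), whereas you prove three bi-implications and establish $(d)\Rightarrow(c)$ by exhibiting an explicit prime chain $0\subsetneq S(q)\subsetneq P$ when $\deg f\geq 1$. Both work; your chain-of-primes argument is pleasantly concrete. One small point: your $(b)\Rightarrow(a)$ for non-algebraically-closed $K$ rests on an unreferenced ``known property'' of $A_1(K)$, and Proposition~\ref{prop_positive} does not apply there. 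The cleanest justification is precisely the Krull-dimension-$1$ argument the paper uses for $(d)\Rightarrow(a)$, so routing that implication through $(d)$ as the paper does would tighten your write-up.
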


\begin{proof} 
First note, that any $K$-linear derivation $d$ of $R=K[x]$ is of the form $d=f\frac{\partial}{\partial x}$ for some polynomial $f\in R$.

$(a)\Rightarrow (b)$ Suppose  $f$ is not constant. Let $\alpha\in K$ be any element that is not a root of $f$ (which exists as $K$ is infinite). 
Then $\m=\langle  x-\alpha\rangle$ is a maximal ideal which does not contain any non-zero $d$-ideal, because if $I=\langle g\rangle \subseteq \m$ is a non-zero $d$-ideal, then $d(g)=h_1g$ for some $h_1\in R$. Let  $h_2 \in R$ and $n>0$ be such that $g=h_2(x-\alpha)^n$, $(x-\alpha)\nmid h_2$.
 Then
$$ h_1h_2(x-\alpha)^n =  d(g) = d(h_2)(x-\alpha)^n + h_2 f n(x-\alpha)^{n-1}$$
which implies $(h_1h_2-d(h_2))(x-\alpha) = nh_2f$. Since $(x-\alpha) \nmid f$ and $(x-\alpha)\nmid h_2$ we obtain a contradiction.
Hence if $f$ is not constant, then $\m$ does not contain any non-zero $d$-ideal and $R$ is $d$-primitive. Thus if $S$ satisfies property $(\diamond)$, then by Corollary \ref{essentiality}  $R$ is $d$-simple. 
However  $\langle f \rangle$ is a proper, non-zero $d$-ideal of $R$.  Hence $f$ has to be constant, \textit{i.e.} $d=\lambda \frac{\partial}{\partial x}$ for some $\lambda \in K$.

The implication $(b) \Rightarrow (c)$ holds since $S\simeq A_1(K)$ if $\lambda \neq 0$ and $(c)\Rightarrow (d)$ is clear as the Krull dimension of $A_1(K)$ over a field $K$ is $1$.

$(d)\Rightarrow (a)$ Any commutative Noetherian ring satisfies $(\diamond)$ by Matlis' result in \cite{Matlis}. If $S$ is a Noetherian domain of left Krull dimension $1$, then every proper factor module of it is Artinian by \cite[Corollary 1.5]{GordonRobson} or \cite[Theorem 10]{Krause}. Hence any cyclic essential extension of a simple left $S$-module is Artinian, \textit{i.e.} $S$ satisfies $(\diamond)$.
\end{proof}

M. Awami, M. Van den Bergh, F. Van Oystaeyen in \cite[2.1]{AwamiVandenBerghVanOystaeyen} and J. Alev and A. Dumas in 
\cite[Proposition 3.2]{AlevDumas} proved that given any $K$-linear automorphism $\sigma$ of $K[x]$ and $\delta$ a 
$K$-linear  $\sigma$-derivation, the Ore extension $S=K[x][y;\sigma,\delta]$ is isomorphic to one of the following 
algebras: $K[x,y]$,  a quantum plane, a quantum Weyl algebra or to a differential operator ring. Since any automorphism 
of $K[x]$ is such that $\sigma(x)=qx+b$ for some $q,b\in K$, following the proof of \cite {AwamiVandenBerghVanOystaeyen} 
and \cite{AlevDumas}, we have:
\begin{itemize}
 \item[$\bullet$] If $q\neq 1$, then $S\simeq K[x'][y; \sigma', \delta]$ where $x'=x+b(q-1)^{-1}$ and $\sigma'(x')=qx'$. 
Now if $p(x)\in K[x]$ and  $r\in K$ are such that $\delta(x')=p(x')(1-q)x'+r$ then $$(y+p(x'))x'=qx'(y+p(x'))+r.$$
If $r=0$, it is easy to see that $S\simeq K_q[x',y']=K\langle x',y'|y'x'=qx'y'\rangle$ for a suitable change of 
variables.\\
If $r\neq 0$, taking $y''=r^{-1}(y+p(x))$, $S\simeq A_1^q(K):=K\langle x',y''|y''x'=qx'y''+1\rangle$.
 \item[$\bullet$] If $q=1$ and $b=0$, $S$ is either $K[x,y]$ or a differential operator ring,  $S=K[x][y;\delta]$.
 \item[$\bullet$] If $q=1$ and $b\neq 0$, $S\simeq K[x'][y; \sigma', \delta]$ by making $x'=b^{-1}x$, $\sigma'(x')=x'+1$ 
and $\delta'(x')=b^{-1}\delta(x)$. Since $$(y+\delta(x))x'=(x'+1)(y+\delta(x))$$
it follows that $S\simeq K[y'][x'; -y'\frac{\partial}{\partial y'}]$.
\end{itemize}

\begin{thm}\label{OreExtension}
Let  $K$ be a field of characteristic zero and let $\sigma$ be a $K$-linear automorphism of $K[x]$ and $d$ be a 
$K$-linear $\sigma$-derivation of $K[x]$.
Let $q,b\in K$ such that $\sigma(x)=qx+b$. The following statements are equivalent:
\begin{enumerate}
 \item[(a)] $S=K[x][\theta;\sigma, d]$ satisfies property $(\diamond)$;
 \item[(b)] $S\simeq K_q[x,y]$ or $S\simeq A_1^q(K)$ for $q$ a root of unity (including $q=1$);
 \item[(c)] $q\neq 1$ is a root of unity or $q=1$ and $d(x)$ is a constant polynomial;
 \item[(d)] $\sigma\neq id$ has finite order or $\sigma=id$ and $d$ is locally nilpotent. 
 \end{enumerate}
\end{thm}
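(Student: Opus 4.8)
The plan is to lean entirely on the explicit classification recalled immediately before the statement, which, according to the values of $q$ and $b$ in $\sigma(x)=qx+b$, identifies $S=K[x][\theta;\sigma,d]$ with one of $K[x,y]$, a quantum plane $K_q[x,y]$, a quantum Weyl algebra $A_1^q(K)$, or a differential operator ring over $K[x]$; I would then read off property $(\diamond)$ for each type. Concretely I would split into the four cases: $q\neq 1$ with $q$ a root of unity, $q\neq 1$ with $q$ not a root of unity, $\sigma=\mathrm{id}$ (that is, $q=1$ and $b=0$), and $q=1$ with $b\neq 0$, proving $(\mathrm a)\Leftrightarrow(\mathrm b)\Leftrightarrow(\mathrm c)$ case by case, and I would treat $(\mathrm c)\Leftrightarrow(\mathrm d)$ separately by an elementary computation.

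For $(\mathrm c)\Leftrightarrow(\mathrm d)$: from $\sigma(x)=qx+b$ one has $\sigma=\mathrm{id}$ exactly when $q=1$ and $b=0$; if $q\neq 1$ the substitution $x'=x+b(q-1)^{-1}$ gives $\sigma(x')=qx'$, whence $\sigma^n(x')=q^nx'$ and $\sigma$ has finite order iff $q$ is a root of unity, whereas for $q=1,b\neq 0$ one has $\sigma^n(x)=x+nb$, of infinite order since $\mathrm{char}\,K=0$. Hence ``$\sigma\neq\mathrm{id}$ of finite order'' matches ``$q\neq 1$ a root of unity''. For the other disjunct, when $\sigma=\mathrm{id}$ the map $d$ is an ordinary derivation $f\frac{\partial}{\partial x}$ of $K[x]$, and, exactly as inside the proof of Corollary \ref{diamond_for_kx}, $d$ is locally nilpotent iff $f=d(x)$ is constant; this matches the second disjunct of (c), provided one reads the clause $q=1$ there as $\sigma=\mathrm{id}$.

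For $(\mathrm a)\Leftrightarrow(\mathrm b)$ I would run the four cases. When $q\neq 1$ the classification gives $S\simeq K_q[x,y]$ or $S\simeq A_1^q(K)$ with the same $q$, and the decisive input is the theorem of Carvalho--Musson \cite{PaulaIan} that such an algebra satisfies $(\diamond)$ precisely when $q$ is a root of unity: for $q$ a (nontrivial) root of unity these algebras are module-finite over their centres, hence Noetherian PI and in particular FBN, so $(\diamond)$ holds by Jategaonkar's theorem, while otherwise one exhibits a non-Artinian finitely generated essential extension of a simple module. This settles the first two cases and matches (b) and the first disjunct of (c). When $\sigma=\mathrm{id}$, $S=K[x][\theta;d]$ is a differential operator ring and Corollary \ref{diamond_for_kx} gives $(\diamond)$ iff $d(x)$ is constant, that is, iff $S\simeq K[x,y]=K_1[x,y]$ or $S\simeq A_1(K)=A_1^1(K)$; this is exactly (b) and (c) in this case. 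Finally, when $q=1$ and $b\neq 0$ the classification gives $S\simeq K[y'][x';-y'\frac{\partial}{\partial y'}]$ independently of $d$; since $-y'\frac{\partial}{\partial y'}$ is not of the form $\lambda\frac{\partial}{\partial y'}$, Corollary \ref{diamond_for_kx} shows $(\diamond)$ fails, and here (a), (b) and (the $\sigma=\mathrm{id}$ reading of) (c) all fail simultaneously. Collating the four cases yields $(\mathrm a)\Leftrightarrow(\mathrm b)\Leftrightarrow(\mathrm c)$.

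The main obstacle is the quantum case: the entire content of the equivalence for $q\neq 1$ rests on the root-of-unity dichotomy for $(\diamond)$ over $K_q[x,y]$ and $A_1^q(K)$, which is the nontrivial result imported from \cite{PaulaIan}; everything else is bookkeeping over the cited classification together with Corollary \ref{diamond_for_kx} and Matlis' theorem for the commutative instance. A secondary but necessary subtlety is the case $q=1,b\neq 0$, where one must notice that the isomorphism type $K[y'][x';-y'\frac{\partial}{\partial y'}]$ does not depend on $d$ (the $\sigma$-derivation being absorbed by a change of variable); this is precisely why the literal reading of the clause $d(x)$ constant in (c) is irrelevant there and why that clause must be understood under $\sigma=\mathrm{id}$.
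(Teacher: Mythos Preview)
Your proof follows essentially the same approach as the paper's: both rely on the Alev--Dumas/Awami--Van den Bergh--Van Oystaeyen classification recalled before the theorem, then invoke Corollary~\ref{diamond_for_kx} for the differential operator case, \cite[Theorems 3.1 and 4.2]{PaulaIan} for the failure of $(\diamond)$ in the non-root-of-unity quantum cases, and Jategaonkar's FBN result for the root-of-unity cases, with the $(c)\Leftrightarrow(d)$ step handled by the same elementary computation of $\sigma^n(x)$. Your explicit treatment of the case $q=1,\ b\neq 0$ and the accompanying observation that (c) must be read under $\sigma=\mathrm{id}$ is in fact more careful than the paper's own proof, which glosses over this subcase, but the overall strategy is identical.
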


\begin{proof} By Corollary \ref{diamond_for_kx} any algebra of the form $K[x][\theta;d]$ satisfying $(\diamond)$ has to 
be isomorphic to the first Weyl algebra or to the polynomial ring, \textit{i.e.} to $A_1^q(K)$ or to $K_q[x,y]$ for $q=1$.
If $q$ is not a root of unity, then \cite[Theorem 3.1]{PaulaIan}, respectively 
\cite[Theorem 4.2]{PaulaIan}, shows that $K_q[x,y]$, respectively $A_1^q(K)$, does not satisfy $(\diamond)$. 
On the other hand if $q\neq 1$ is a root of unity, then $K_q[x,y]$ and $A_1^q(K)$ are PI-algebras and hence in 
particular FBN which satisfy $(\diamond)$ 
by Jategaonkar's result (see \cite{Jategaonkar_conj}). The case $q=1$ is obtained from the fact that the first Weyl 
algebra is a prime Noetherian algebra of Krull dimension $1$.
Together with the characterisation above, this shows $(a)\Leftrightarrow (b)\Leftrightarrow (c)$.

$(c)\Leftrightarrow (d)$ Note that for all $n>1$ we have $\sigma^n(x)=q^nx+\frac{q^n-1}{q-1}b$ if $q\neq 1$. Suppose 
$\sigma\neq id$, then $\sigma$ has order $n$ if and only if $q$ is an $n$th root of unity.
\end{proof}

It still remains unclear how to extend Theorem \ref{diamond_for_kx} to polynomial rings 
$R=K[x_1, \ldots, x_n]$ in $n$ variables over a field $K$ of characteristic $0$.

\section{Acknowledgment}
The authors would like to thank the referee for his/her careful reading and comments that improved the paper.
This research was funded by the European Regional Development Fund through the programme COMPETE and by the Portuguese 
Government through the FCT 
- Fundação para a Ciência e a Tecnologia under the project PEst-C/MAT/UI0144/2013. The second author was supported by 
the grant SFRH/BD/33696/2009.

\providecommand{\bysame}{\leavevmode\hbox to3em{\hrulefill}\thinspace}
\providecommand{\MR}{\relax\ifhmode\unskip\space\fi MR }
\providecommand{\MRhref}[2]{%
  \href{http://www.ams.org/mathscinet-getitem?mr=#1}{#2}
}
\providecommand{\href}[2]{#2}

\end{document}